 \renewcommand{\ge}{\geqslant}
 \renewcommand{\le}{\leqslant}
  \newcommand{\y}{\mathbf{y}}
  \newcommand{\x}{\mathbf{x}}
  \newcommand{\z}{\mathbf{z}}
  \newcommand{\0}{\mathbf{0}}
 \newcommand\bk{{\mathbf k}}
 \newcommand\bu{\mathbf{u}}
 \newcommand\bv{\mathbf{v}}
 \newcommand\bw{{\mathbf w}}
 \newcommand\cA{{\mathcal A}}
 \newcommand\cB{{\mathcal B}}
 \newcommand\cH{{\mathcal H}}
 \newcommand\cK{{\mathcal K}}
 \newcommand\cL{{\mathcal L}}
 \newcommand\cM{{\mathcal M}}
 \newcommand\cN{{\mathcal N}}
 \newcommand\cT{{\mathcal T}}
 \newcommand\rH{{\rm H}}
  \newcommand{\e}{\mathbf{e}}
  \newcommand{\f}{\mathbf{f}}
  \newcommand{\g}{\mathbf{g}}
  \newcommand{\h}{\mathbf{h}}
  \newcommand{\trans}{^\top}
\newcommand{\tr}{\operatorname{tr}}
\newcommand{\C}{\mathbb{C}}
\newcommand{\N}{\mathbb{N}}
\newtheorem{theorem}{Theorem}
\newtheorem{lemma}[theorem]{Lemma}
\newtheorem{corollary}[theorem]{Corollary}
\theoremstyle{definition}
\theoremstyle{remark}
\begin{document}
\title{Infinite dimensional generalizations of Choi's theorem}
\author{Shmuel Friedland\\
Department of Mathematics, Statistics and Computer Science,\\
 University of Illinois at Chicago, Chicago, Illinois 60607-7045, USA }
  \date{July 6, 2018 }
\maketitle
\begin{abstract}
In this paper we give a simple sequence of necessary and sufficient finite dimensional conditions for a positive map between certain subspaces of bounded linear operators on separable Hilbert spaces to be completely positive.  These criterions are natural generalization of Choi's  characterization for completely positive maps between pairs of linear operators on finite dimensional Hilbert spaces.  
We apply our conditions to a completely positive map  between two trace class operators on separable Hilbert spaces.  A map $\mu$ is called a quantum channel, if it is trace preserving, and $\mu$ is called a quantum subchannel if it decreases the trace of a positive operator.  We give simple neccesary and sufficient condtions for $\mu$ to be a quantum subchannel.   
We show that $\mu$ is a quantum subchannel if and only if it has Hellwig-Kraus representation.  The last result extends the classical results of Kraus and the recent result of Holevo for characterization of a quantum channel.
\end{abstract}
 \noindent {\bf 2010 Mathematics Subject Classification.}
 46N50, 81Q10, 94A40.

\noindent \emph{Keywords}:  Completely positive maps, Stinespring's dilation theorem, Choi's theorem, trace class operators, quantum channels, quantum subchannels, Hellwig-Kraus representation.
 \section{Introduction}\label{sec:intro}
 A quantum system in von Neumann model is described by a compact positive operator acting on a separable Hilbert $\cH$ space with trace one.  We denote by $T_1(\cH)$ the closed Banach space of trace class operators on $\cH$.  The evolution of an open quantum system from one system to another one can be described by a  bounded positive linear operator $\mu$ between a pair of trace class operators of two separable Hilbert spaces $\cH_1$ and $\cH_2$: $\mu:T_1(\cH_1)\to T_1(\cH_2)$.  The standard assumption that $\mu$ is trace preserving.  A more general assumption that $\mu$ decrease the trace of a positive operator \cite{HK69}. It was realized by Kraus \cite{Kra71} that physical assumptions yield that $\mu$ has to be completely positive.  A completely positive trace preserving $\mu$ is called a quantum channel.  We call a completely positive $\mu$ which decreases the trace of positive operators a quantum subchannel.  The purpose of this paper is threefold. First, we give a simple sequence of necessary and sufficient finite dimensional conditions for a positive map between certain subspaces of bounded linear operators on separable Hilbert spaces to be completely positive. These criterions are natural generalization of Choi's  characterization for completely positive maps between pairs of linear operators on finite dimensional Hilbert spaces.  Second,  we give a simple sequence of necessary and sufficient finite dimensional conditions for  that $\mu$ is a quantum subchannel. Third, we show that any quantum subchannel has Hellwig-Kraus representation. The last result extends the classical result of Kraus \cite{Kra71} and the recent result of Holevo \cite{Hol11} for characterization of a quantum channel.  
 
We now explain in technical details the main results of this paper.  The notion of a completely positive operator between two $C^*$-algebra was introduced by Stinespring \cite{Sti55}.  In his seminal paper Stinespring gave necessary and sufficient conditions for a positive map $\mu$ between two $C^*$ algebras to be completely positive.  This is the celebrated Stinespring's dilation (representation) theorem.  For the finite dimensional case, that is, the two $C^*$ algebras are the the spaces of square complex valued matrices, $\mu$ has a simple representation.  However, given a linear transformation $\mu$ from one space of square complex valued matrices to another space of square complex matrices, it is not simple to decide if $\mu$ is completely positive.  The simple necessary and sufficient conditions for $\mu$ to be completely positive were given by Choi in his fundamental paper \cite{Cho75}.  In \cite{HK69} the authors considered the evolution of an open quantum system.  They showed that under corresponding quantum physics assumption the map $\mu$ on the trace class to itself has a special representation \cite[(6)-(7)]{HK69}, which we call Hellwig-Kraus representation.  (See \eqref{HKqsc} for the case $\cK=\cH$.)  Kraus proved in \cite[Theorem 3.3]{Kra71} that a quantum subchannel has representation \eqref{HKqsc} for $\cK=\cH$ assuming that $\mu$ is continuous with respect to ultraweak (weak operator) topology. 
Generalizations to quantum channels on a pair trace class operators were given by Holevo \cite{Hol11} and by St{\o}rmer to a von Neumann factor with a separating and cyclic vector \cite{Sto15}.

In the first part of this paper we give a natural generalization of Choi's theorem for a bounded linear $\mu: \Sigma_1\to \Sigma_2$, where $\Sigma_i$
is given as a subspace of bounded linear operators $B(\cH_i)$, endowed with the norm $\|\cdot\|_i$.  (We assume that there exists a constant $K_i>0$ such that $\|g\|\le K_i\|g\|_i$, where $\|g\|$ is the operator norm on $B(\cH_i)$.)  We impose the following conditions on $\Sigma_i$.  
The first condition is that each $\Sigma_i$ is invariant under the conjugation: $\Sigma_i^*=\Sigma_i$.  The second condition is that each $\Sigma_i$ contains the subspace of all  finite range operators.
These two condition holds for all Schatten $p$-classes $T_p(\cH_i)$ of compact operators for $p\in[1,\infty]$, where $T_ \infty(\cH_i)$ is the subspace of compact operators in $B(\cH_i)$ with the standard operator norm.  The third condition is more technical and is stated later.  It is always satisfied if $\Sigma_1=T_p(\cH)$ for $p\in[1,\infty]$.   Our main object is to give a sequence of necessary and sufficient conditions,  where each condition is exactly the Choi conditions for the finite dimensional case. 

Our first main result can be stated as follows.  Assume that $\cH$ is a Hilbert space with an inner product $\langle\cdot,\cdot\rangle$.  
For $\bv\in\cH$ and $a\in B(\cH)$ we denote by $\bv^\vee$ and  $a^*\in B(\cH)$ the bounded linear functional  $\bv^ \vee(\x)=\langle \x,\bv\rangle$, and the adjoint operator   $\langle a\x,\y\rangle=\langle\x,a^*\y\rangle$ on $\cH$ respectively.
  For $\bu,\bv\in\cH$ denote by $\bu\bv^\vee\in B(\cH)$ the rank one operator $\bu\bv^\vee(\x)=\langle\x,\bv\rangle\bu$.  
Recall that a sequence $a_n\in B(\cH), n\in\N$ converges in the weak operator topology to $a\in B(\cH)$ if $\lim_{n\to\infty}\langle a_n\x,\y\rangle=\langle a\x,\y\rangle$ for all $\x,\y\in \cH$.  
Denote by $B(\cH)^{(n)}$ the space of $n\times n$ matrices $G=[g_{ij}]_{i=j=1}^n$, where $g_{ij}\in B(\cH)$ for $i,j\in[n]=\{1,\ldots,n\}$.  Then $G^*$ is the adjoint matrix: $(G^*)_{ij}=g^*_{ji}$.  The matrix $G$ is selfadjoint if $G=G^*$.  A matrix $G$ is called positive, which is denoted by $G\ge 0$, if $G$ is selfadjoint and $\sum_{i=j=1}^n \langle g_{ij}\x_i,\x_j\rangle\ge 0$ for all $\x_1,\ldots,\x_n\in\cH$.
Assume that $\mu:\Sigma_1\to\Sigma_2$ is a bounded linear map.   Then $\mu$ is called completely positive if  the following two conditions are satisfied:  First, $\mu$ commutes with the adjoint map, i.e.,  $\mu(g)^*=\mu(g^*)$ for all $g\in\cH_1$. (This is equivalent to the condition that $\mu$ preserves selfadjoint operators.)  Second, for each $n\in \N$ and $G=[g_{ij}]_{i=j=1}^n\ge 0$ the matrix $\mu(G)=[\mu(g_{ij})]_{i=j=1}^n$ is positive.

Our first generalization of Choi's result can be stated as follows:
\begin{theorem}\label{mainthminf2}  For $i=1,2$ assume that the following conditions hold:
\begin{enumerate}
\item $\cH_i$ is a Hilbert space with an orthonormal basis $\{\e_{j,i}\}, j\in\N$.  
\item For each $n\in\N$ denote by $P_{n,i}$ the projection on $\textrm{span}(\e_{1,i},\ldots,\e_{n,i})$.
\item  $\Sigma_i$ is a  subspace of $B(\cH_i)$ closed with respect to the norm $\|\cdot\|_i$ and the conjugation $^*$.  Furthermore $\Sigma_i$ contains  the subspace of all  finite range operators.
\item For each $g\in \Sigma_1$ the sequence $\mu(P_{n,1}g P_{n,1}), n\in\N$ converges in the weak operator topology to $\mu(g)$.
\end{enumerate}
Then a bounded linear operator $\mu: \Sigma_1\to \Sigma_2$ is completely positive if 
and only if for each $n\in\N$
the  matrix $L_n(\mu):=[P_{n,2} \mu(\e_{i,1}\e_{j,1}^\vee)P_{n,2}]_{i=j=1}^n\in B(\Sigma_2)^{(n)}$ is positive.
\end{theorem}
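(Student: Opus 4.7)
The plan is to reduce both directions to the finite-dimensional Choi theorem via compressions and weak-operator limits; necessity is immediate while sufficiency carries the main content. For necessity: the block matrix $[\e_{i,1}\e_{j,1}^\vee]_{i,j=1}^n$ is the Choi matrix of the identity on the $n$-dimensional corner of $\cH_1$ and is positive in $B(\cH_1)^{(n)}$. Since $\mu$ is completely positive, $[\mu(\e_{i,1}\e_{j,1}^\vee)]_{i,j=1}^n$ is positive, and conjugating by $\diag(P_{n,2},\ldots,P_{n,2})$ preserves positivity, giving $L_n(\mu)\ge 0$.

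For sufficiency I would proceed in three stages. Stage~1: fix $n$ and observe that for every $N\ge n$, the principal $n\times n$ submatrix of $L_N(\mu)$ is $[P_{N,2}\mu(\e_{i,1}\e_{j,1}^\vee)P_{N,2}]_{i,j=1}^n\ge 0$. Since $P_{N,2}\to I$ strongly, each entry converges in the weak operator topology to $\mu(\e_{i,1}\e_{j,1}^\vee)$, and because WOT-limits of positive block matrices remain positive, $M_n:=[\mu(\e_{i,1}\e_{j,1}^\vee)]_{i,j=1}^n\ge 0$ for every $n$. The selfadjointness of each $L_n(\mu)$, combined with hypothesis~(4) and the WOT-continuity of the adjoint, then yields $\mu(g)^*=\mu(g^*)$, first on finite-rank operators and then by approximation on all of $\Sigma_1$. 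Stage~2: given $G=[g_{ij}]_{i,j=1}^m\ge 0$ in $\Sigma_1^{(m)}$, compress by $\bar P_{k,1}:=\diag(P_{k,1},\ldots,P_{k,1})$ to obtain $G_k=[P_{k,1}g_{ij}P_{k,1}]\ge 0$, and expand each block as $P_{k,1}g_{ij}P_{k,1}=\sum_{p,q=1}^{k}\langle g_{ij}\e_{q,1},\e_{p,1}\rangle\,\e_{p,1}\e_{q,1}^\vee$. The scalar $mk\times mk$ matrix $\Gamma$ with entries $\Gamma_{(i,p),(j,q)}=\langle g_{ij}\e_{q,1},\e_{p,1}\rangle$ is positive, being the matrix of $G_k$ restricted to $\bigoplus_{1}^{m}\mathrm{span}(\e_{1,1},\ldots,\e_{k,1})$. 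Factor $\Gamma=\sum_\alpha v_\alpha v_\alpha^*$ as a finite sum of rank-ones; for arbitrary test vectors $\x_1,\ldots,\x_m\in\cH_2$, the sesquilinear form witnessing positivity of $\mu(G_k)$ reduces by direct reindexing to a sum over $\alpha$ of sesquilinear forms of $M_k$ evaluated on the vectors $\bw_{\alpha,p}:=\sum_i\overline{v_\alpha(i,p)}\,\x_i\in\cH_2$, which are nonnegative by Stage~1. Hence $\mu(G_k)\ge 0$. Stage~3: by hypothesis~(4), $\mu(P_{k,1}g_{ij}P_{k,1})\to\mu(g_{ij})$ in WOT for each $(i,j)$, so $\mu(G)$ is the WOT-entrywise limit of the positive matrices $\mu(G_k)$ and is therefore positive.

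The main obstacle is the index bookkeeping in Stage~2---converting the scalar Choi-type decomposition of $\Gamma$ into the operator-valued positivity inequality for $\mu(G_k)$. Hypothesis~(4) is crucial and enters in two distinct places: it upgrades selfadjointness from finite-rank operators to all of $\Sigma_1$, and it transports positivity from the finite-rank compressions $\mu(G_k)$ to $\mu(G)$ itself. This hypothesis is precisely the bridge between the purely algebraic, finite-dimensional content of Choi's theorem and its infinite-dimensional extension.
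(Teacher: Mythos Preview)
Your proof is correct and follows essentially the same strategy as the paper: both establish $M_n=[\mu(\e_{i,1}\e_{j,1}^\vee)]_{i,j=1}^n\ge 0$ by letting the $\cH_2$-projection tend to the identity, then handle a general positive $G\in\Sigma_1^{(m)}$ by compressing each block with $P_{k,1}$, reducing to a finite-dimensional Choi-type argument via rank-one decomposition, and finally invoking hypothesis~(4) to pass to the WOT limit. The paper packages the middle step through an auxiliary theorem that separates the cases $\dim\cK<\infty$, $\dim\cH<\infty$, and both infinite, whereas you carry out the rank-one reindexing directly at the scalar level in Stage~2; the mathematical content is the same.
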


We will show that if $\Sigma_1=T_p(\cH_1)$ then the condition (4) is satisfied.
In other cases the assumption (4) is satisfied if $\mu$ maps any convergent sequence in the strong operator topology to a convergent sequence in the weak operator  topology.  A stronger condition than (4) is that $\mu$ is normal.  That is, if $a_n\in\Sigma_1,n\in\N$ is a sequence of increasing positive operators  that converges
to $a\in \Sigma_1$ in the weak operator topology, then $\mu(a_n),n\in\N$ is an increasing sequence that converges to $\mu(a)$ in the weak operator topology.

Using a standard model of an interaction of an open quantum system  with another quantum system Hellwig and Kraus \cite[(6)-(7)]{HK69} showed that $\mu$ has the following form:  $\mu(a)=\sum_{i=1}^{\infty} A_i a A_i^\vee$.  Here we assume that $A_i\in L(\cH_1,\cH_2)$, $A_i^\vee:\cH_2^\vee\to \cH_1^\vee$, (the dual operator), for $i\in\N$.  Furthermore we have the inequality $\sum_{i=1}^\infty A_i^\vee A_i\le Id$.
To be precise it is assumed in \cite{HK69} that $\cH_1=\cH_2$.  We call such $\mu$ a quantum subchannel.  It is straightforward to show that a quantum subchannel is a quantum channel if and only iff $\sum_{i=1}^\infty A_i^\vee A_i=Id$.  
It was shown by Kraus \cite[Theorem 3.3]{Kra71} that assuming that $\cH_1=\cH_2$ and $\mu$ continuous with respect the weak operator topology a quantum subchannel has the above representation).  Our second main result is that a subchannel has the above Hellwig-Kraus representation (without the assumption that $\mu$ is continuous with respect to the weak operator topology.
For a quantum channel this result was shown by Holevo \cite{Hol11}. (In this case 
$\sum_{i=1}^\infty A_i^\vee A_i=Id$, and the above representation is called Kraus representation.)  

We now survey briefly the contents of this paper.  In \S\ref{sec:SCthms} we recall the Stinespring dilatation theorem \cite{Sti55} and Choi's theorem on characterization of completely positive operators between two spaces of complex square matrices \cite{Cho75}.  In \S\ref{sec:prfmthm} we state Theorem \ref{mainthm}, which is a variation of Theorem \ref{mainthminf2}.  We then prove the two theorems.  In \S\ref{sec:addres} we show that for $\Sigma_1=T_p(\cH_1)$ the condition (4) of Theorem \ref{mainthminf2} is satisfied. In \S\ref{sec:repqc} we discuss notions  quantum channels and subchannels on finite and infinite dimensional subspaces.  
We show that any quantum subchannel has Hellwig-Kraus representation.  
\section{Stinespring and Choi theorems}\label{sec:SCthms}
 Let $\cA$  be a $C^*$-algebra and denote by $K(\cA)$ the cone of positive operators.  So $0\le a\iff a\in K(\cA)\iff a=bb^*$.  
Assume that $\cB$ is another $C^*$ algebra.  A bounded linear map $\mu: \cA\to\cB$ is called positive, and denote by $\mu\ge 0$, if $\mu(a)\ge 0$ for each $a\ge 0$.  A completely positive operator $\mu$ was defined by Stinespring \cite{Sti55} as follows:  Let $\cA^{(n)}$ be the algebra
of $n\times n$ matrices $G=[g_{ij}]_{i=j=1}^n$,where $g_{ij}\in \cA$ for $i,j=[n]=\{1,\ldots,n\}$.  Then $\cA^{(n)}$ is a $C^*$ algebra, where $(G^*)_{ij}=g^*_{ji}, i,j\in[n]$.
Clearly, $\mu$ induces the linear bounded maps $\mu_n:\cA^{(n)}\to \cB^{(n)}$ for each $n\in\N$.  The map $\mu$ is completely positive if $\mu_n\ge 0$ for each $n\in\N$. 

Let $\cK$ be a Hilbert space and denote by $B(\cK)$ the $C^*$ algebra of bounded linear operators $g:\cK\to\cK$.  Assume that $\cA$ is a $C^*$ algebra with the unit $1$.  The $\rho: C^*\to B(\cK)$ is called a $^*$-representation if $\rho$ is positive and $\rho(1)=Id$.  

Assume $\cA$ is a unital $C^*$ algebra, $\cH$ is a Hilbert space and  $\mu: \cA\to B(\cH)$ is a bounded linear operator. The Stinespring dilation theorem says that there exists a $^*$-representation  $\rho:\cA\to B(\cK)$ such that $\mu=V^*\rho V$ for a some bounded operator $V:\cH\to\cK$ if and only $\mu$ is completely positive. 

One can drop the assumption that $\cA$ is unital as follows.  Suppose that $\cA$ is not unital and $\mu:\cA\to B(\cH)$ is completely positive. Then $\cA$ is contained in a unital $C^*$-algebra $\cA_1=\cA\oplus \C$  with naturally defined operations \cite[Proposition I.1.3]{Dav96}.  Furthermore, $\mu$ lifts to  a completely positive $\mu_1:\cA_1\to B(\cH)$, where $\mu_1((a,z))=\mu(a)$.   
We call $\rho:\cA\to B(\cK)$ a representation, if $\rho$ is the restriction of a representation $\rho_1:\cA_1\to B(\cK)$.
We thus obtain the following theorem, which is usually called Stinespring's  representation theorem \cite[Theorem 6.1.1]{Ror01}:
\begin{theorem}\label{Stinespring} Let $\mu$ be a completely positive map from a $C^*$-algebra $\cA$ into $B(\cH)$. Then there is a representation $\rho$ of $\cA$ on some Hilbert space $\cK$ and a bounded linear
operator $V:\cH\to\cK$ such that $\mu(a)=V^*\rho(a) V$.
If $\cA$ has a unit $1$ and $\mu(1)=Id$ then $V$ is an isometry.
\end{theorem}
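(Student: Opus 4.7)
My plan is the classical GNS-type construction due to Stinespring. By the unitization paragraph preceding the statement, I may assume $\cA$ has a unit $1$, since for the non-unital case the desired $\rho$ is obtained by restricting the representation produced for $\cA_1=\cA\oplus\C$ down to $\cA$.

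On the algebraic tensor product $\cA\otimes\cH$ I define a sesquilinear form by
\[
\Big\langle \sum_i a_i\otimes \x_i,\ \sum_j b_j\otimes \y_j\Big\rangle_\mu
\;:=\; \sum_{i,j}\langle \mu(b_j^*a_i)\x_i,\y_j\rangle_\cH.
\]
Positivity of this form is exactly complete positivity of $\mu$: for $\xi=\sum_{i=1}^n a_i\otimes \x_i$, the matrix $[a_j^*a_i]_{i,j=1}^n$ factors as $A^*A\in \cA^{(n)}$ (where $A$ is the single-row matrix of the $a_i$), hence is positive, so $[\mu(a_j^*a_i)]\ge 0$ in $B(\cH)^{(n)}$, which is exactly the statement $\langle\xi,\xi\rangle_\mu\ge 0$. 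Let $\cN$ be the null space (a linear subspace by Cauchy--Schwarz) and let $\cK$ denote the Hilbert-space completion of $(\cA\otimes\cH)/\cN$.

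Now I define $\rho(a)(b\otimes \x) := (ab)\otimes \x$ and extend linearly. The crucial technical step is $\|\rho(a)\|\le \|a\|$. Since $\|a\|^2\cdot 1-a^*a\ge 0$ in $\cA$, the $n\times n$ matrix $[b_j^*(\|a\|^2\cdot 1-a^*a)b_i]_{i,j}$ factors (as above) as a positive element of $\cA^{(n)}$. Complete positivity of $\mu$ then yields, after pairing with $(\x_1,\ldots,\x_n)$,
\[
\sum_{i,j}\langle \mu(b_j^*a^*ab_i)\x_i,\x_j\rangle \;\le\; \|a\|^2\sum_{i,j}\langle \mu(b_j^*b_i)\x_i,\x_j\rangle,
\]
i.e.\ $\|\rho(a)\xi\|_\mu^2\le \|a\|^2\|\xi\|_\mu^2$. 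Specializing to $\|\xi\|_\mu=0$ shows $\rho(a)\cN\subset \cN$, so $\rho(a)$ descends to a bounded operator on $\cK$, and a direct check on elementary tensors shows $a\mapsto \rho(a)$ is a unital $*$-homomorphism, hence a representation.

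Finally, I set $V:\cH\to\cK$ to be $V\x := [1\otimes \x]$; since $\|V\x\|_\cK^2=\langle \mu(1)\x,\x\rangle$, $V$ is bounded with $V^*V=\mu(1)$, and $V$ is an isometry precisely when $\mu(1)=Id$. Expanding,
\[
\langle V^*\rho(a)V\x,\y\rangle_\cH \;=\; \langle \rho(a)(1\otimes\x),\,1\otimes\y\rangle_\mu \;=\; \langle \mu(a)\x,\y\rangle_\cH,
\]
which gives the required identity $\mu(a)=V^*\rho(a)V$. The main obstacle is the bound $\|\rho(a)\|\le\|a\|$: this is precisely where complete positivity (rather than mere positivity) is unavoidable, since one must convert the scalar inequality $\|a\|^2\cdot 1 - a^*a\ge 0$ into an $n\times n$ matrix positivity statement matching the Stinespring inner product; verifying simultaneously that $\rho(a)$ is well-defined on the quotient and contractive on $\cK$ is the one place where a little care is required.
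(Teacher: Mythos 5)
Your proof is correct and is the standard GNS-type dilation argument; the paper itself does not prove this theorem but quotes it from R{\o}rdam's text, and your construction (positivity of the form on $\cA\otimes\cH$ via factoring $[a_j^*a_i]$ as a positive element of $\cA^{(n)}$, the contractivity bound from $\|a\|^2\cdot 1-a^*a\ge 0$, and $V\x=[1\otimes\x]$) is exactly the classical proof behind that citation. It is also the same technique the paper later reuses in the proof of Theorem~\ref{subchthm}, so nothing further is needed beyond your (correct) appeal to the unitization paragraph for the non-unital case.
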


Assume that $\cH$ and $\cK$ are separable Hilbert space, i.e. $\cH$ and $\cK$ have countable orthonormal bases.  Let $\mu:B(\cK)\to B(\cH)$ be a bounded linear map. 
What are simple necessary and sufficient conditions for $\mu$ to be completely positive?  

When $\cK$ and $\cH$ are finite dimensional, the elegant necessary and sufficient conditions were given  by Choi \cite{Cho75}:  Choose orthonormal bases $\bk_1,\ldots,\bk_n$ and $\h_1,\ldots,\h_m$ in $\cK$ and $\cH$ respectively.
This will give an isomorphism $\iota_{\cK}: B(\cK)\to \C^{n\times n}, \iota_{\cH}: B(\cH)\to \C^{m\times m}$.  More precisely, for each $\0\ne\bu, \bv\in\cK$ denote by $\bu\bv^\vee$ the following rank one operator in $B(\cK)$: $(\bu\bv^\vee)(\bw)=\langle\bw,\bv\rangle\bu$, where $\langle\cdot,\cdot\rangle$ is the inner product in $\cK$.  So $\bk_i\bk_j^\vee, i,j\in[n]$ is the standard basis of $B(\cK)$ associated with the orthonormal basis of $\bk_1,\ldots,\bk_n$, which corresponds to the standard basis on $n\times n$ matrices $\C^{n\times n}$.
Then $\mu$ is completely positive if and only if the $mn$ hermitian matrix, written in a block matrix form,  is positive definite:
\begin{equation}\label{Choicond}
L(\mu)=[\mu(\bk_i\bk_j^\vee)]_{i=j=1}^n\ge 0.
\end{equation}
 \section{Proof of the main theorem}\label{sec:prfmthm}
 Let $\cH$ be a Hilbert space and assume $\{g_n\}_{ n\in\N}\subset B(\cH)$.  Then $\lim_{n\to \infty} g_n =g$: 
 \begin{enumerate}
\item in the strong operator topology, abbreviated as strongly convergent sequence, if $\lim_{n\to\infty} g_n\x=g\x$ for each $\x\in\cH$; 
\item in the weak operator topology, abbreviated as weakly convergent sequence, if 
 $\lim_{n\to\infty} \langle g_n\x,\y\rangle=\langle g\x,\y\rangle$ for each $\x,\y\in\cH$.
 \end{enumerate}
 
 Assume that $\cK$ is another Hilbert space, and let $\mu: B(\cK)\to B(\cH)$ be a bounded linear map.  We say that $\mu$ maps strongly convergent sequences to weakly convergent sequences if for each strongly convergent sequence $\lim_{n\to\infty}g_n=g$ we have that $\lim_{n\to\infty} \mu(g_n)=\mu(g)$ in the weak operator topology. 
 
 Recall  the following well known result, whose proof we leave to the reader..
\begin{lemma}\label{wconvT}  Let $\cH$ be a Hilbert space with an infinite countable basis $\{\h_n\}, n\in\N$.  Denote by $P_n:\cH\to\cH$ the orthogonal projection on span$(\h_1,\ldots,\h_n)$ for $n\in\N$.  Then for each $\x\in\cH$ and $g\in B(\cH)$ 
\[\lim_{n\to\infty} \|P_n\x-\x\|=\lim_{n\to\infty}\|gP_n\x-g\x\| =\lim_{n\to\infty}\|P_ngP_n\x-g\x\| =0.\]
\end{lemma}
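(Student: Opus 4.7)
The plan is to prove the three limits in turn, each one building on the previous. Throughout I would use only Parseval's identity for the orthonormal basis $\{\h_n\}_{n\in\N}$, together with the basic operator norm inequality $\|gv\|\le\|g\|\,\|v\|$.

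For the first limit, expand $\x=\sum_{k=1}^{\infty}\langle\x,\h_k\rangle\h_k$ so that $P_n\x=\sum_{k=1}^{n}\langle\x,\h_k\rangle\h_k$. By orthonormality,
\[
\|\x-P_n\x\|^2=\sum_{k=n+1}^{\infty}|\langle\x,\h_k\rangle|^2,
\]
which is the tail of the convergent series $\sum_{k=1}^{\infty}|\langle\x,\h_k\rangle|^2=\|\x\|^2<\infty$, and hence tends to $0$.

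The second limit is then immediate from boundedness of $g$: since $gP_n\x-g\x=g(P_n\x-\x)$, we have
\[
\|gP_n\x-g\x\|\le\|g\|\cdot\|P_n\x-\x\|\to 0.
\]

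For the third limit I would insert the intermediate vector $P_ng\x$ and apply the triangle inequality,
\[
\|P_ngP_n\x-g\x\|\le\|P_n(gP_n\x-g\x)\|+\|P_ng\x-g\x\|\le\|gP_n\x-g\x\|+\|P_ng\x-g\x\|,
\]
where I used that the orthogonal projection $P_n$ has operator norm $\le 1$. The first term tends to $0$ by the second limit, and the second term tends to $0$ by applying the first limit to the vector $g\x\in\cH$ in place of $\x$. There is no genuine obstacle here; the only mild point to watch is remembering to bound $\|P_n\|\le 1$ so that the first summand reduces to a quantity already shown to vanish.
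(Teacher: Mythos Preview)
Your proof is correct. The paper itself omits the proof of this lemma (``whose proof we leave to the reader''), and the argument you give---Parseval for the first limit, the operator norm bound for the second, and the triangle inequality with the intermediate point $P_ng\x$ together with $\|P_n\|\le 1$ for the third---is exactly the standard proof one would expect.
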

The above lemma is equivalent to the statement:
\[\lim_{n\to\infty} P_n=Id,\quad \lim_{n\to\infty}gP_n=\lim_{n\to\infty} P_ngP_n=g\]
in the strong operator topology.
  
 To prove Theorem \ref{mainthminf2} it will be convenient to state a variation of Theorem \ref{mainthminf2}:
 \begin{theorem}\label{mainthm}  Let $\cK$ and $\cH$ be Hilbert spaces with countable bases $\bk_1,\ldots,\h_1,\ldots,$ respectively.  For each $m\in\N$ denote by $P_m$ the orthogonal projection of $\cH$ on span$(\h_1,\ldots,\h_m)$ if $\dim \cH\ge m$.  Assume that  $\mu: B(\cK)\to B(\cH)$ is a bounded linear operator which maps strongly convergent sequences to weak convergent sequences.  Then in each of the following cases $\mu$ is completely positive if and only if the corresponding  conditions hold:
\begin{enumerate}
\item Assume that $\cK$ has dimension $n$ with an orthonormal basis $\bk_1,\ldots,\bk_n$ and $\cH$ is infinite dimensional. Then for each $m\in\N$
the hermitian matrix $L_{n,m}(\mu):=[P_m \mu(\bk_i\bk_j^\vee)P_m]_{i=j=1}^n\in B(\cH)^{(n)}$ is positive.
\item  Assume that $\cH$ has dimension $m$ with an orthonormal basis $\h_1,\ldots,\h_m$ and $\cK$ is infinite dimensional.  Then for each $n\in\N$ the
the  matrix $L_{n,m}(\mu):=[ \mu(\bk_i\bk_j^\vee)]_{i=j=1}^n\in B(\cH)^{(n)}$ is positive.
\item Assume that $\cK$ and $\cH$ are infinite dimensional.
Then for each $n\in\N$
the matrix $L_n(\mu):=[P_n \mu(\bk_i\bk_j^\vee)P_n]_{i=j=1}^n\in B(\cH)^{(n)}$ is positive.
\end{enumerate}
\end{theorem}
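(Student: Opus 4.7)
The plan is to treat case (3) as the main argument; cases (1) and (2) arise by specializing away the $Q_n$-truncation in $\cK$ or the $P_m$-sandwich in $\cH$. In all three cases both directions ultimately reduce to Choi's finite-dimensional theorem \eqref{Choicond} after a suitable approximation step.

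For \emph{necessity}, the block matrix $[\bk_i\bk_j^\vee]_{i,j=1}^n$ is positive in $B(\cK)^{(n)}$, so complete positivity of $\mu$ forces $[\mu(\bk_i\bk_j^\vee)]_{i,j=1}^n\ge 0$ in $B(\cH)^{(n)}$. Sandwiching each entry by a projection $P$ preserves positivity (replace each test vector $\x_i$ by $P\x_i$ in the defining inequality), which yields positivity of $L_n(\mu)$ in case (3) and of $L_{n,m}(\mu)$ in cases (1)--(2).

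For \emph{sufficiency} in case (3), fix $N\in\N$ and a positive $G=[g_{ij}]_{i,j=1}^N\in B(\cK)^{(N)}$; the goal is to show $\mu(G)\ge 0$. Let $Q_n$ be the orthogonal projection in $\cK$ onto $\mathrm{span}(\bk_1,\ldots,\bk_n)$. Then $[Q_ng_{ij}Q_n]_{i,j=1}^N$ is again positive, and each entry expands as
\[
Q_ng_{ij}Q_n=\sum_{k,l=1}^n c^{(n)}_{ij,kl}\,\bk_k\bk_l^\vee,\qquad c^{(n)}_{ij,kl}=\langle g_{ij}\bk_l,\bk_k\rangle.
\]
Identifying the finite-rank operators supported on $\mathrm{span}(\bk_1,\ldots,\bk_n)$ with $M_n(\C)$ converts this into positivity of the scalar $Nn\times Nn$ matrix $[c^{(n)}_{ij,kl}]$. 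The hypothesis $L_n(\mu)\ge 0$ together with \eqref{Choicond} says exactly that the linear map $\tilde\mu_n\colon M_n(\C)\to B(P_n\cH)$ sending the matrix unit $E_{kl}$ to $P_n\mu(\bk_k\bk_l^\vee)P_n$ is completely positive; applying $\tilde\mu_n$ entrywise to $[c^{(n)}_{ij,kl}]$ then yields
\[
[P_n\mu(Q_ng_{ij}Q_n)P_n]_{i,j=1}^N\ge 0.
\]

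It remains to pass to the limit $n\to\infty$. By Lemma~\ref{wconvT}, $Q_ng_{ij}Q_n\to g_{ij}$ in the strong operator topology, and the hypothesis on $\mu$ then yields $\mu(Q_ng_{ij}Q_n)\to\mu(g_{ij})$ in the weak operator topology. Combined with $P_n\to\mathrm{Id}_\cH$ strongly, the uniform bound $\|\mu(Q_ng_{ij}Q_n)\|\le\|\mu\|\,\|g_{ij}\|$, and the identity $\langle P_nTP_n\x,\y\rangle=\langle TP_n\x,P_n\y\rangle$, a standard estimate gives $\langle P_n\mu(Q_ng_{ij}Q_n)P_n\x,\y\rangle\to\langle\mu(g_{ij})\x,\y\rangle$ for every $\x,\y\in\cH$. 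Passing to the limit in the positivity inequality defining $[P_n\mu(Q_ng_{ij}Q_n)P_n]\ge 0$ then yields $\mu(G)=[\mu(g_{ij})]\ge 0$. Cases (1) and (2) follow by the same argument with $Q_n=\mathrm{Id}_\cK$ or $P_m=\mathrm{Id}_\cH$, respectively. The main technical point is precisely this joint weak-limit step, which relies on both the boundedness of $\mu$ and the strong-to-weak continuity hypothesis.
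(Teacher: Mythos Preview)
Your proposal is correct and follows essentially the same strategy as the paper: reduce to Choi's finite-dimensional criterion on the truncated blocks and then pass to the limit using Lemma~\ref{wconvT} together with the strong-to-weak continuity hypothesis on $\mu$. The one organizational difference is that the paper proves the three cases sequentially---first (1) by letting $m\to\infty$, then (2) by letting $n\to\infty$ via the strong-to-weak hypothesis, and finally (3) by first fixing $m$, invoking part~(2) for $\hat\mu_m(g)=P_m\mu(g)P_m$, and only afterwards letting $m\to\infty$---whereas you take the diagonal $m=n$ and perform a single joint limit. Your joint-limit step is legitimate because the operators $\mu(Q_ng_{ij}Q_n)$ are uniformly norm-bounded by $\|\mu\|\,\|g_{ij}\|$, so weak convergence $\mu(Q_ng_{ij}Q_n)\to\mu(g_{ij})$ combined with norm convergence $P_n\x\to\x$, $P_n\y\to\y$ does yield $\langle P_n\mu(Q_ng_{ij}Q_n)P_n\x,\y\rangle\to\langle\mu(g_{ij})\x,\y\rangle$ as you claim. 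The paper's two-step decoupled limit avoids having to check this mixed convergence explicitly, but your route is slightly shorter once that estimate is in hand.
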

Note that if $\cK$ is finite dimensional then the norm operator topology, the strong operator topology and the weak are operator topology on $B(\cK)$ are same topologies.
\begin{proof}
Assume that $\mu$ is completely positive.  Suppose that $\dim \cK\ge n$.  Let $G=[\bk_i\bk_j^\vee]_{i,j=1}^n$.  As $G=(\frac{1}{\sqrt{n}}G) (\frac{1}{\sqrt{n}}G)^*$ it follows that $G$ is positive.  Hence $\mu_n(G)=[\mu(\bk_i\bk_j^\vee)]_{i,j=1}^n$ is positive.  In particular,
\[\sum_{i=j=1}^n \langle\mu(\bk_i\bk_j^\vee)\x_j,\x_i\rangle\ge 0\]
for each $\x_1,\ldots,\x_n\in\cH$.  Assume that $\dim\cH\ge m$. Then $P_m\x_1,\ldots,P_m\x_n\in$span$(\h_1,\ldots,\h_m)$.    Therefore $L_{n,m}(\mu)$, as defined in part \emph{(1)} is positive. In particular, if $\dim\cH\ge n$ we obtain that $L_n(\mu)=L_{n,n}(\mu)$ is positive.

We now need to show the sufficiency of the conditions in (1), (2) and (3). 

 \noindent 
 (1)  We first show that $M_n(\mu):=[\mu(\bk_i\bk_j^\vee)]_{i=j=1}^n\in B(\cH)^{(n)}$ is positive.  Fix $\x_1,\ldots,\x_n\in\cH$.  As $L_{n,m}(\mu)\ge 0$ it follows that
 \[\sum_{p=q=1}^n \langle P_m\mu(\bk_p\bk_q^\vee)P_m\x_q,\x_p\rangle\ge 0.\]
 Let  $m\to\infty$ and use Lemma \ref{wconvT} to deduce that $\sum_{p=q=1}^n \langle \mu(\bk_p\bk_q^\vee)\x_q,\x_p\rangle\ge 0$.  Hence $M_n(\mu)\ge 0$. 
 
 We now show that the image of $B(\cK)^{(k)}$ by $\mu_k$ is positive for each $k\in\N$.  Recall that $B(\cK)$ is isomorphic to $\C^{n\times n}$.  Hence $B(\cK)^{(k)}$ is isomorphic to  $(\C^{n\times n})^{(k)}=\C^{(kn)\times(kn)}$.  
Denote by $\tilde \mu_k: \C^{(kn)\times(kn)}\to B(\cH)^{(k)}$ the induced map by $\mu_k$.  To show that $\tilde\mu_k$ is positive, it is enough to show that the image of rank one $(kn)\times (kn)$ matrices positive matrix by $\tilde\mu_k$ is positive.
A rank one positive matrix of order $kn$ in the block form is $U=[\bu_i\bu_j^*]_{i=j=1}^k$, where  $\bu_1=(u_{1,1},\ldots,u_{n,1})^\top,\ldots,\bu_k=(u_{1,k},\ldots,u_{n,k})^\top\in\C^n$.  This rank one matrix is represented by $V=[\bv_i \bv^\vee_j]_{i=j=1}^k\in B(\cK)^{(k)}$, where $\bv_i=\sum_{p=1}^n u_{p,i}\bk_p$ for $i\in[k]$.

A straightforward computation yields:
\[F=[F_{ij}]_{i=j=1}^{k}=\mu_k([\bv_i\bv_j^\vee]_{i=j=1}^k)=[\mu(\bv_i\bv_j^\vee)]_{i=j=1}^k=[\bu_i^\top M_{n}(\mu)\bar\bu_j]_{i=j=1}^k\in B(\cH)^{(k)}\]
is selfadjoint.  Assume that $\x_1,\ldots,\x_k\in\cH$.  Then
\begin{eqnarray*}
\sum_{i=j=1}^k \langle F_{ij}\x_j,\x_i\rangle=\sum_{p=q=1}^n\langle \mu(\bk_p\bk_q^\vee)\y_q,\y_p\rangle,\quad \y_q=\sum_{j=1}^k \bar u_{q,j}\x_j, q\in[n].
\end{eqnarray*}
Hence $F$ is positive.

\noindent
(ii)  We need to show that $\mu_k$ is positive for each $k\in\N$.  
Assume that $G=[g_{ij}]^k_{i=j=1} \in B(\cK)^{(k)}$ is positive.   Denote by $Q_n$ the orthogonal projection of $\cK$ on span$(\bk_1,\ldots,\bk_n)$.  Let $\cK_n=Q_n\cK$.  Define
$G_n=[Q_ng_{ij}Q_n]^k_{i=j=1}$.  Then $G_n\ge 0$.  Observe that $G_n\in B(\cK_n)^{(k)}$.  The assumption that $L_{n,m}(\mu):=[ \mu(\bk_i\bk_j^\vee)]_{i=j=1}^n\in B(\cH)^{(n)}$ is positive and Choi's theorem yield that $\mu_k(G_n)\ge 0$.  By Lemma \ref{wconvT} $Q_n g Q_n, n\in \N$ converges strongly to $g$ for each $g\in B(\cK)$.  As $\mu(Q_ngQ_n), n\in\N$ converges weakly to $\mu(g)$ it follows that for fixed $\x_1,\ldots,\x_k\in\cK$ we have the equaity

\[\sum_{i=j=1}^k \langle\mu(g_{ij})\x_j,\x_i\rangle=\lim_{n\to\infty}\sum_{i=j=1}^k \langle\mu( Q_ng_{ij}Q_n)\x_j,\x_i\rangle\ge 0.\]
Hence $\mu_k(G)$ is positive.

\noindent
(iii) Assume that for each $n\in\N$
the matrix $L_n(\mu):=[P_n \mu(\bk_i\bk_j^\vee)P_n]_{i=j=1}^n\in B(\cH)^{(n)}$ is positive.  Fix $m\in\N$ and let $\cH_m=P_m\cH_m$.  It is straightforward to show that for each $n\ge m$ the matrix $L_{n,m}(\mu)\in B(\cH_m)^{(n)}$, as defined in (1), is positive.  Let $\hat\mu_m: B(\cK)\to B(\cH_m)$ be given by $\hat\mu_m(g)=P_m \mu(g) P_m$.  Part (2) yields that $\hat \mu_m$ is completely positive.  That is for each $k\in\N$ and positive $G=[g_{ij}]_{i=j=1}^k\in B(\cK)^{(k)}$ the matrix $[\hat\mu(g_{ij})]_{i=j=1}^k=[P_m\mu(g_{ij})P_m]_{i=j=1}^k\in B(\cH_m)^{(k)}$ is positive.  That is , for fixed $\x_1,\ldots,\x_k\in\cH$ we have 
\[\sum_{i=j=1}^k \langle P_m\mu(g_{ij})P_m (P_m\x_j),P_m\x_i\rangle=\sum_{i=j=1}^k \langle P_m\mu(g_{ij})P_m \x_j,\x_j\rangle\ge 0.\]
Let $m\to\infty$ and use Lemma \ref{wconvT} to deduce that $\mu_k(G)\ge 0$. Thus $\mu_k$ is positive for each $k\in\N$.
\end{proof}

\noindent
\emph{Proof of Theorem \ref{mainthminf2}.}  Since $\Sigma_1$ and $\Sigma_2$ contain all finite range operators the proof of Theorem \ref{mainthm}  yields that the condition that $L_m(\mu)=[P_{m,2} \mu(\e_{i,1}\e_{j,1}^\vee)P_{m,2}]_{i=j=1}^n\in B(\Sigma_2)^{(n)}$ is positive is necessary.  It is left to show that this condition is sufficient.
Fix $n\in\N$ and let $\cK_n=P_{n,1}\cH_1$.  Let $\mu'_n$ be the restriction of $\mu$ to $B(\cK_n)$.  Assume that $m\ge n$.  The assumption that $L_m(\mu)$ positive yields that $L_{n,m}(\mu)=[P_{m,2} \mu(\e_{i,1}\e_{j,1}^\vee)P_{m,2}]_{i,j=1}^n\in B(\Sigma_2)^{(n)}$ is positive.  Use the prove of part (1) of Theorem \ref{mainthm} to deduce that 
$\mu'_n$ is completely  positive.  Next, fix $m\in\N$ let $\cH_m=P_{m,2}\cH_2$ and  $\hat\mu_m:\Sigma_1\to B(\cH_m)$ be defined as in the proof of part (3) of  Theorem \ref{mainthm}.  As $\mu(P_{n,1}gP_{n,1}), n\in\N$ converges weakly to $\mu(g)$ the proof of part (2) of Theorem  \ref{mainthm} yields that $\hat \mu_m$ is completely positive.
The proof of part (3) of Theorem \ref{mainthm} yields the proof of Theorem \ref{mainthminf2}.\qed
\section{Additional results}\label{sec:addres}
We now recall some standard facts about bounded linear operators on a Hilbert space \cite{RS98}. 
Let $\cH$ be a Hilbert space with the norm $\|\x\|=\sqrt{\langle \x,\x\rangle}$.  For $g\in B(\cH)$ let $\|\g\|=\sup\{\|B\x\|, \|\x\|\le 1\}$ be the operator norm of $g$.
Denote by $T_{\infty}(\cH)\subset B(\cH)$ the closed subspace of compact operators on $\cH$.  Recall that $g\in T_{\infty}(\cH)$ has a convergent singular value decomposition with respect to the operator norm $\|\cdot\|_{\infty}:=\|\cdot\|$:
\begin{eqnarray}\label{SVD}
g=\sum_{i=1}^{\infty}\sigma_i (g)\bk_i\h_i^\vee, \;
\langle \bk_i,\bk_j \rangle=\langle \h_i,\h_j \rangle=\delta_{ij}, i,j\in \N,\\
\|g\|=\sigma_1(g)\ge \cdots\ge\sigma_n(g)\ge\cdots \ge 0,\; \lim_{n\to\infty}\sigma_n(g)=0.\notag
\end{eqnarray}
In particular, $\|g-\sum_{i=1}^n \sigma_i (g)\bk_i\h_i^\vee\|=\sigma_{n+1}(g)$.
Furthermore
\begin{eqnarray*}
g^*=\sum_{i=1}^{\infty}\sigma_i (g)\h_i\bk_i^\vee, \quad\sqrt{gg^*}=\sum_{i=1}^{\infty} \sigma_i (g)\bk_i\bk_i^\vee \ge 0, \quad \sqrt{g^*g}=\sum_{i=1}^{\infty} \sigma_i (g)\h_i\h_i^\vee \ge 0.
\end{eqnarray*}
For $p\in[1,\infty)$ denote by 
\[T_p(\cH)=\{g\in T_\infty(\cH), \|g\|_p:=(\sum_{i=1}^{\infty} |\sigma_i(g)^p)^{\frac{1}{p}}<\infty\}\] the $p$-Schatten class \cite{Sch60,Sim05}.  
In particular for $g\in T_p(\cH)$ we have the equalities 
\begin{eqnarray*}
\|g-\sum_{i=1}^n \sigma_i (g)\bk_i\h_i^\vee\|_p=(\sum_{j=n+1}^\infty\sigma_{j}(g)^p)^{\frac{1}{p}}, \;n\in\N,\\
\|\sqrt{gg^*}\|_p=\|\sqrt{g^*g}\|_p=\|g\|_p=\|g^*\|_p, \quad p\in [1,\infty).
\end{eqnarray*}
Recall that if $a\in B(\cH)$ and $g\in T_p(\cH)$ then $ag,ga\in T_p(\cH)$.  Furthermore
\begin{equation}\label{upbndpnorm}
\max(\|ag\|_p,\|ga\|_p)\le \|a\|\|g\|_p, \quad a\in B(\cH), g\in T_p(\cH), p\in[1,\infty].
\end{equation}
The classes $T_1(\cH)$ and $\cT_2(\cH)$ are called the trace class and the Hilbert-Schmidt class respectively.

The following result is well known and we bring its proof for completeness:
\begin{lemma}\label{normconv}  Let $\cH$ be a countable Hilbert space with an orthonormal basis $\{\e_i\}, i\in\N$.  Let $P_n\in B(\cH)$ be the orthonormal projection on span$(\e_1,\ldots,\e_n)$ for $n\in\N$.  Assume that $p\in[1,\infty]$ and $g\in T_p(\cH)$. Then $\lim_{n\to\infty}\|P_ngP_n-g\|_p=0$.
\end{lemma}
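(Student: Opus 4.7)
The plan is to reduce to the finite rank case by approximation, using the fact that every $g \in T_p(\cH)$ is a $\|\cdot\|_p$-limit of its truncated singular value decompositions.

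Fix $\varepsilon>0$. By the convergent SVD of $g$ in \eqref{SVD}, there is $N\in\N$ such that the finite rank operator $g_N:=\sum_{i=1}^N \sigma_i(g)\bk_i\h_i^\vee$ satisfies $\|g-g_N\|_p<\varepsilon$ (for $p\in[1,\infty)$ this is the tail of a convergent series of $p$-th powers of singular values; for $p=\infty$ it equals $\sigma_{N+1}(g)\to 0$). Now write the telescoping identity
\[
P_ngP_n-g \;=\; P_n(g-g_N)P_n \;+\; (P_ng_NP_n-g_N) \;+\; (g_N-g).
\]
By \eqref{upbndpnorm} and $\|P_n\|\le 1$, both the first and third terms have $\|\cdot\|_p$-norm at most $\|g-g_N\|_p<\varepsilon$. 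So it suffices to prove the middle term tends to $0$ in $\|\cdot\|_p$ for the fixed finite rank operator $g_N$.

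For the middle term I would use compactness. Since $g_N$ is compact, $g_N$ maps the closed unit ball $B_1\subset\cH$ to a relatively compact set. By Lemma \ref{wconvT}, $P_n\to Id$ strongly, and strong convergence is uniform on relatively compact sets. Hence
\[
\|(Id-P_n)g_N\|_\infty \;=\; \sup_{\x\in B_1}\|(Id-P_n)g_N\x\| \;\longrightarrow\; 0,
\]
and applying the same argument to $g_N^*$ (also compact) gives $\|g_N(Id-P_n)\|_\infty=\|(Id-P_n)g_N^*\|_\infty\to 0$. Writing $P_ng_NP_n-g_N=(P_n-Id)g_N+P_ng_N(P_n-Id)$ and using the submultiplicative bound \eqref{upbndpnorm} yields $\|P_ng_NP_n-g_N\|_\infty\to 0$.

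To upgrade this to $\|\cdot\|_p$ convergence for $p\in[1,\infty)$, observe that $P_ng_NP_n-g_N$ has rank at most $3N$, so all its nonzero singular values are bounded by its operator norm and there are at most $3N$ of them; thus $\|P_ng_NP_n-g_N\|_p\le (3N)^{1/p}\|P_ng_NP_n-g_N\|_\infty\to 0$. Combining the three estimates, $\|P_ngP_n-g\|_p<3\varepsilon$ for all $n$ sufficiently large, which proves the lemma. The only delicate step is the uniform convergence $\|(Id-P_n)g_N\|_\infty\to 0$, which is exactly where the compactness of elements of $T_p(\cH)$ is essential.
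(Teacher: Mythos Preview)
Your proof is correct and follows essentially the same approach as the paper's: the same finite-rank SVD approximation $g_N$, the same three-term splitting of $P_ngP_n-g$, and the same rank argument to pass from $\|\cdot\|_\infty$ to $\|\cdot\|_p$ convergence on the middle term. The only cosmetic difference is that the paper establishes $\|P_ng_NP_n-g_N\|_\infty\to 0$ by treating each rank-one piece $\bk_i\h_i^\vee$ directly (via $P_n\bk_i\to\bk_i$ and $\bu^\vee P_n=(P_n\bu)^\vee$), whereas you invoke the general fact that strong convergence of a uniformly bounded sequence is uniform on relatively compact sets; the paper also uses the slightly sharper rank bound $2N$ in place of your $3N$.
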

\begin{proof} Assume $g$ has the singular value decomposition \eqref{SVD}.  
Let $g_m=\sum_{i=1}^m \sigma_i (g)\bk_i\h_i^\vee$.
Fix $\varepsilon>0$.  Choose $M(\varepsilon)$ such that $\|g-g_m\|_p<\frac{\varepsilon}{3}$.  Clearly
\begin{eqnarray*}
P_ngP_n-g=(P_ng_mP_n-g_m) +P_n(g-g_m)P_n+(g_m-g).
\end{eqnarray*}
Hence
\begin{eqnarray*}
\|P_ngP_n-g\|_p\le \|P_ng_mP_n-g_m\|_p +\|P_n(g-g_m)P_m\|_p+\|g_m-g\|_p.
\end{eqnarray*}
As $\|P_n\|=1$ it follows that 
\[\|P_n(g-g_m)P_m\|_p\le \|P\| \|g-g_m\|_p\|P_n\|\le\|g-g_m\|_p<\varepsilon/3.\]
Observe next that for $\bu\in\cH$ we have that $\bu^\vee P_n=(P_n\bu)^\vee$.  Lemma \ref{wconvT} yields that 
\[\lim_{n\to\infty}\|P_n\bk_i-\bk_i\|=\lim_{n\to\infty}\|(P_n\h_i)^\vee-\h_i^\vee\|=0 \textrm{ for } i\in\N.\]
Hence $\lim_{n\to\infty}\|P_n(\bk_i\h_i^\vee)P_n-\bk_i\h_i^\vee\|=0$ for each $i\in\N$.
In particular $\lim_{n\to\infty}\|P_ng_mP_n-g_m\|=0$.  Therefore $\lim_{n\to\infty} \sigma_i(P_n g_m P_n-g_m)=0$ for each $i\in\N$.
As the dimension of the range of $g_m$ and $P_ng_mP_n$ is at most $m$ it follows that $\sigma_i(P_ng_mP_n-g_m)=0$ for $i>2m$.  Hence
there exists $N(\varepsilon)\in\N$ such that $\|P_ng_mP_n-g_m\|_p<\varepsilon/3$ for $n>N(\varepsilon)$.  In particular, $\|P_n g P_n -g\|_p<\varepsilon$.
\end{proof}
\begin{corollary}\label{Pnmuconv} Let $\cK$ be an infinite dimensional separable Hilbert space, and $\Sigma_2$ be a Banach space.  Let $P_n:\cK\to\cK$ be the orthogonal projection on span$(\bk_1,\ldots,\bk_n)$ for $n\in\N$.  Assume that $\mu: T_p(\cK)\to \Sigma_2,p\in[1,\infty]$ is a bounded linear operator.  Then for each $g\in T_p(\cK)$ $\lim_{n\to\infty}\|\mu(P_ngP_n)-\mu(g)\|=0$.
\end{corollary}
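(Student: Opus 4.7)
The plan is straightforward: this corollary is essentially an immediate consequence of Lemma \ref{normconv} together with the fact that $\mu$ is a bounded linear operator from the Banach space $(T_p(\cK),\|\cdot\|_p)$ to $\Sigma_2$.

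First I would use linearity of $\mu$ to rewrite
\[
\mu(P_n g P_n) - \mu(g) = \mu(P_n g P_n - g).
\]
Next I would invoke boundedness: there exists a constant $C = \|\mu\|$ (the operator norm of $\mu$ as a map from $(T_p(\cK),\|\cdot\|_p)$ to $\Sigma_2$) such that for every $h \in T_p(\cK)$,
\[
\|\mu(h)\|_{\Sigma_2} \le C\,\|h\|_p.
\]
Applying this with $h = P_n g P_n - g$ yields
\[
\|\mu(P_n g P_n) - \mu(g)\|_{\Sigma_2} \le C\,\|P_n g P_n - g\|_p.
\]

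Finally, Lemma \ref{normconv} gives $\lim_{n\to\infty}\|P_n g P_n - g\|_p = 0$ (note that the lemma is stated for all $p \in [1,\infty]$, and the projections $P_n$ used there are exactly the orthogonal projections onto $\mathrm{span}(\bk_1,\ldots,\bk_n)$, matching the corollary's setup under the identification of the orthonormal basis). Combining with the previous display, the right-hand side tends to $0$, proving the claim.

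There is no genuine obstacle here; the only thing that needs a moment of care is to make sure Lemma \ref{normconv} is applied with the same $p$ and the same projections as in the corollary, and that $\mu$'s boundedness is read with respect to the $\|\cdot\|_p$ norm on the domain (which is implicit in the hypothesis that $\mu:T_p(\cK)\to\Sigma_2$ is bounded linear). The argument works uniformly in $p\in[1,\infty]$ since Lemma \ref{normconv} covers that whole range.
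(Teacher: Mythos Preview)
Your proof is correct and is exactly the intended argument: the paper states the corollary without proof, as an immediate consequence of Lemma~\ref{normconv} and the boundedness and linearity of $\mu$.
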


Assume that in Theorem \ref{mainthminf2} $\Sigma_1=T_p(\cH_1)$ for some $p\in[1,\infty]$.  Then the assumption (4) holds.  

We now give a well known condition which ensures the condition (4) on $\mu$ in Theorem \ref{mainthminf2} is satisfied.  Let $S(\cK)\subset B(\cK)$ be the real subspace of selfadjoint operators on the Hilbert space $\cK$. Let $K(B(\cK))\subset S(\cK)$ be the cone of positive operators.
Then $K(B(\cK))$ is a closed, pointed ($K(B(\cK))\cap(-K(B(\cK)))=\{0\}$, generating ($S(\cK)=K(B(\cK))-K(B(\cK))$) cone with interior.  For $a,b\in S(\cK)$ denote by $a\ge b$ if $a-b\ge 0$ ($a-b\in K(B(\cK))$).  Let $K(T_p(\cK)):=K(B(\cK))\cap T_p(\cK)$ for $p\in[1,\infty]$ be a closed pointed generating cone in $S(B(\cK))\cap T_p(\cK)$.  Recall that $B(\cK)$ is the dual space of $T_1(\cK)$, where $a(g)= \tr(ga)$ for $a\in B(\cK)$ and $g\in T_1(\cK)$ \cite[Theorem VI.26 (b)]{RS98} or \cite{Sim05}.
The weak star topology ($w^*$) on $B(\cK)$ is the weak operator topology, which is also called the ultraweak topology for $C^*$ algebras.
Recall that $K(B(\cK))$ is the dual cone of $K(T_1(\cK))$.  A sequence $a_n\in K(B(\cH))$ is called increasing if $0\le a_1\le \cdots\le a_n\le \cdots$.  
Assume that above sequence is bounded by norm.  Then there exists a subnet of this sequence that converge in $w^*$ topology to $a\in K(B(\cK))$ \cite{Kar59}.  This is equivalent to the statement that $\lim_{n\to\infty}=\langle a_n\x,\x\rangle = \langle a\x,\x\rangle$ for each $\x\in\cK$.
We denote $a$ $\sup a_n$.   Assume that $\mu: B(\cK)\to B(\cH)$ is a bounded positive  linear map ($\mu(K(B(\cK)))\subset K(B(\cH))$.  Then  $\mu$ is called  is called normal  if for each increasing seqeunce $a_n\in K(B(\cK))$ we have the equality
$\mu(\sup a_n)=\sup \mu(a_n)$.  This is equivalent to the assumptions that every $w^*$ convergent net in $B(\cK)$ is mapped to a $w^*$ convergent net in $B(\cH)$ \cite{Kar59,Sak}.  Thus if in Theorem \ref{mainthminf2} $\mu$ is normal the condition $\mu$ maps strongly convergent sequence to weakly convergent sequences are satisfied.
\section{Representation of quantum channels}\label{sec:repqc}
Denote by $S(B(\C^n))=\rH_n\supset \rH_{n,+}=K(B(\C^n))\supset \rH_{n,+,1}$ the real space of $n\times n$ hermitian matrices, the cone of positive (semidefinite) matrices and the convex set of density matrices.
Let $\mu:\C^{n\times n}\to \C^{m\times m}$ be a completely positive map:
\begin{equation}\label{fdimcp}
\mu(X)=\sum_{i=1}^k A_i X A_i^*, \quad A_i\in\C^{m\times n}, i\in[k].
\end{equation}
A standard definition of a quantum channel is a completely positive map $\mu$  that preserves  density matrices.  This is equivalent to the assumption that $\mu$ is trace preserving, i.e. $\sum_{i=1}^k A_i^* A_i = I_n$ \cite{FL16}.  A more general definiton can be stated as in \cite{HK69,Kra71}.  We call a nonzero completely positive operator $\mu$ given by \eqref{fdimcp}  a quantum subchannel if $\tr X\ge \tr \mu(X)$ for each  $X\in\rH_{n,+,1}$.  This is equivalent to the assumption, e.g. \cite{FL16},
\begin{equation}\label{subchancond}
0\lneq \sum_{i=1}^k A_i^*A_i\le I_n.
\end{equation}
Then the nonlinear transformation $X\mapsto \frac{1}{\tr X}\mu(X)$ is a nonlinear quantum channel, which takes $X\in\rH_{n,+,1}$ to $\frac{1}{\tr X}\mu(X)$ with probability  $\tr \mu(X)$.

Let $\cK$ be a separable infinite dimensional Hilbert space.  Then $g\in T_1(\cK)$ is called a density operator (matrix) if $g\ge 0$ and $\tr g=1$.  Let $\cH$ be another separable infinite dimensional Hilbert space.  
Then $\mu: T_1(\cK)\to T_1(\cH)$ is called quantum channel if $\mu$ is completely positive operator which preserves the density operators.  This is equivalent to the assumption that $\mu$ is trace preserving.
We call $\mu$ a quantum subchannel if $\mu$ is completely positive operator such that for each density operator $g$ we have $\tr g\ge \tr \mu(g)$.  The following theorem gives a simple neccessary and sufficient conditions for $\mu: T_1(\cK)\to T_1(\cH)$ to be a quantum subchannel.  Different necessary and sufficient conditions for trace preserving map $\mu: T_1(\cK)\to T_1(\cH)$ are given \cite{Hol11}.
\begin{theorem}\label{charsubchan}  Let $\cK$ and $\cH$ be Hilbert spaces with infinite countable bases $\{\bk_i\},\{\h_i\}, i\in\N,$ respectively.  For each $n\in\N$ denote by $P_n$ the orthogonal projection of $\cH$ on span$(\h_1,\ldots,\h_n)$ Assume that  $\mu: T_1(\cK)\to T_1(\cH)$ is a bounded linear operator.  Then  $\mu$ is a subchannel if and only for each $n\in\N$ if the following conditions hold:
\begin{enumerate}
\item The matrix $L_n(\mu)=[P_n\mu(\bk_i\bk_j^\vee) P_n]_{i,j\in[n]}\in B(\cH)^{(n)}$ is positive.

\item The $n\times n$ matrix $M_n:=[\tr P_n\mu(\bk_i\bk_i^\vee)P_n]_{i,j\in[n]}$ satisfies $M_n\le I_n$.
\end{enumerate}
Furthermore, $\mu$ is a trace preserving channel if and only if the following conditions  holds: First the above condition (1) holds for each $n\in\N$.  Second 
\begin{equation}\label{tracepreser}
\tr \mu(\bk_i \bk_j^\vee)=\delta_{ij}, \quad i,j\in\N.
\end{equation} 
\end{theorem}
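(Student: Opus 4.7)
The plan is to split the proof into the complete positivity half, which is condition (1), and the trace-budgeting half, which is condition (2) in the subchannel case and \eqref{tracepreser} in the channel case. For the first half I would apply Theorem \ref{mainthminf2} with $\Sigma_1 = T_1(\cK)$, $\Sigma_2 = T_1(\cH)$: hypothesis (4) of that theorem is supplied (in the stronger sense of trace-norm convergence) by Corollary \ref{Pnmuconv}. This identifies condition (1) as equivalent to complete positivity of $\mu$, so from this point on I may use that $\mu$ is completely positive; in particular $\mu(\bv\bv^\vee)\ge 0$ in $T_1(\cH)$ for every $\bv\in\cK$.

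For the subchannel equivalence, the forward direction is a short calculation. Given $u=(u_1,\ldots,u_n)\in\C^n$, set $\bw = \sum_{i=1}^n \bar u_i\bk_i$, so that $\bw\bw^\vee = \sum_{i,j}\bar u_i u_j\,\bk_i\bk_j^\vee\ge 0$ and $\tr(\bw\bw^\vee) = \|\bw\|^2 = u^*u$. Since $\mu(\bw\bw^\vee)\ge 0$ in $T_1(\cH)$ and $\tr P_n a P_n \le \tr a$ for positive trace-class $a$, one gets $u^* M_n u = \tr P_n\mu(\bw\bw^\vee)P_n \le \tr\mu(\bw\bw^\vee) \le u^*u$, so $M_n\le I_n$. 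For the converse I first establish $\tr\mu(\bv\bv^\vee) \le \|\bv\|^2$ for every $\bv\in\cK$ by a two-stage limit, then extend to every positive $g\in T_1(\cK)$ by spectral decomposition. Fix $\bv$ and let $\bv_N = \sum_{i\le N}\langle\bv,\bk_i\rangle\bk_i$. Applying condition (2) to the coordinate vector of $\bv_N$, for every $n\ge N$ we have $\tr P_n\mu(\bv_N\bv_N^\vee)P_n \le \|\bv_N\|^2$. Since $\mu(\bv_N\bv_N^\vee)\ge 0$, the left side is monotone in $n$ with limit $\tr\mu(\bv_N\bv_N^\vee)$, so $\tr\mu(\bv_N\bv_N^\vee) \le \|\bv_N\|^2$. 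Finally, $\bv_N\bv_N^\vee\to\bv\bv^\vee$ in trace norm, $\mu$ is bounded $T_1\to T_1$, and the trace is $T_1$-continuous, so letting $N\to\infty$ gives $\tr\mu(\bv\bv^\vee)\le\|\bv\|^2$. The extension to $g = \sum_i \lambda_i\bv_i\bv_i^\vee$ with $\{\bv_i\}$ orthonormal and $\lambda_i\ge 0$ is then immediate by truncating the sum and using $T_1$-continuity of $\mu$.

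The channel equivalence is shorter. Forward is immediate from trace preservation applied to $\bk_i\bk_j^\vee$, giving $\tr\mu(\bk_i\bk_j^\vee) = \tr(\bk_i\bk_j^\vee) = \langle\bk_i,\bk_j\rangle = \delta_{ij}$. Conversely, $\tr$ and $\tr\circ\mu$ are bounded linear functionals on $T_1(\cK)$ agreeing on each $\bk_i\bk_j^\vee$, hence on their linear span; by Lemma \ref{normconv} every $g\in T_1(\cK)$ is the trace-norm limit of $P_n g P_n$, which lies in that span, so the two functionals coincide on all of $T_1(\cK)$. The only mildly delicate step in the whole argument is the two-stage limit in the subchannel backward direction: $n\to\infty$ must be taken first, with $N$ held fixed so that monotone convergence of the trace applies to the fixed positive operator $\mu(\bv_N\bv_N^\vee)$, and only afterwards does one push $N\to\infty$ using $T_1$-continuity of $\mu$; a simultaneous single-indexed limit would not be obviously justified.
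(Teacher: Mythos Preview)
Your proof is correct and follows essentially the same approach as the paper's: condition (1) is reduced to complete positivity via Theorem \ref{mainthminf2} together with the trace-norm convergence supplied by Lemma \ref{normconv}/Corollary \ref{Pnmuconv}, and the trace conditions are handled by testing against rank-one operators $\bu\bu^\vee$ with $\bu$ in a finite span and passing to limits. The only differences are cosmetic: the paper extends from rank-one to general positive $g$ via the finite-range truncations $Q_n g Q_n$ rather than via spectral decomposition, and for the channel converse it decomposes into positive parts rather than invoking density of the span of $\{\bk_i\bk_j^\vee\}$ under two bounded functionals.
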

\begin{proof}  In view of Theorem \ref{mainthminf2} and Lemma \ref{normconv} it follows that the condition (1) is a necessary and sufficient conditions for $\mu$ to be completely positive.  We now show that if $\mu$ is a quantum subchannel then the condition (2) holds.  Suppose that $g\in T_1(\cH)$.  Recall that  $\tr g=\sum_{i=1}^{\infty} \langle g\h_i,\h_i\rangle$.  Assume that $g\ge 0$.  Then
\[\tr g\ge \sum_{i=1}^n \langle g\h_i,\h_i\rangle=\tr P_n g P_n.\]
Assume that $f\in T_1(\cK)$ and $f\ge 0$.  The assumption that $\mu$ is a subchannel yields that $\tr f\ge \tr \mu(f)\ge \tr P_n\mu(f)P_n$.  Choose $f=\bu\bu^\vee$, where $\bu=\sum_{i=1} x_i \bk_i$.  Let $\x=(x_1,\ldots,x_n)\trans$.  Thus $\tr f=\x^*\x$ and
$\tr P_n\mu(f)P_n=\x^*M_n\x$.  Hence $M_n\le I_n$.

It is left to show that if the condition (2) is sufficient for a completely positive map to be a quantum subchannel.  Fix $n\in\N$ and let $m\ge n$.  Let $f=\bu\bu^\vee$ be defined as above.  We then obtain that $\tr f\ge \tr P_m\mu(f)P_m$.  Use the arguments of the proof of Theorem \ref{mainthminf2} to deduce that $\tr f\ge \tr \mu(f)$.  Assume that 
$a\in T_1(\cK), a\ge 0$ and the range of $a$ is contained in span$(\bk_1,\ldots,\bk_n)$.  Then $a=\sum_{j=1}^n \bu_j\bu_j^\vee$ f or some $\bu_1,\ldots,\bu_n\in$span$(\bk_1,\ldots,\bk_n)$.  Hence 
\[\tr a=\sum_{j=1}^n\tr \bu_j\bu_j^\vee\ge \sum_{j=1}^n\tr \mu(\bu_j\bu_j^\vee)=\tr \mu(a).\] 
Let $b\in T_1(\cK), b\ge 0$.  Then $P_nbP_n$ is of the above form and we get that
$\tr P_n bP_n\ge \tr \mu(P_n b P_n)$.  Let $n\to\infty$ and use Lemma \ref{normconv}
to deduce that $\tr b \ge \tr \mu(b)$.

It is left to discuss the necessary and sufficient conditions for a completely positive $\mu$ to be trace preserving.  Suppose first that $\mu$ is trace preserving.  Clearly, condition (1) holds.  As $\mu$ is trace preserving we deduce that the condition \eqref{tracepreser} holds.  It is left to show that the condition (1) and  \eqref{tracepreser} yield that $\mu$ is quantum channel.  As in the previous case the condition (1) ensures that $\mu$ is completely positive.  It is left to show that $\mu$ is trace preserving.  Assume as above that $a\ge 0$ and range of $a$ is contained in span$(\bk_1,\ldots,\bk_n)$.  The the condition  \eqref{tracepreser} yields that $\tr a=\tr \mu(a)$.  Use the above arguments for $b\ge 0$ to deduce that $\tr b=\tr \mu(b)$.
As every selfadjoint $b\in T_1(\cK)$ is of the form $b_+-b_-$, where $b_+,b_-\in T_-(\cK)$ and $b_+,b_-\ge 0$ it follows that $\tr b=\tr \mu(b)$ for each selfadjoint in $T_1(\cK)$.  As $\mu$ is linear it follows that $\mu$ is trace preserving on $T_1(\cK)$. 

\end{proof} 

Recall that the dual space of $T_1(\cH)$, denoted as $T_1(\cH)^\vee$,  is $B(\cH)$.
Assume that $\mu: T_1(\cK)\to T_1(\cH)$ be a bounded linear operator.  Denote by $\mu^\vee$ the induced linear operator $\mu^\vee: B(\cH)\to B(\cK)$.
\begin{theorem}\label{composmus}   Let $\cK$ and $\cH$ be Hilbert spaces with infinite countable bases $\{\bk_i\},\{\h_i\}, i\in\N,$ respectively.  Assume that the bounded linear operator $\mu: T_1(\cK)\to T_1(\cH)$ is completely positive.  Then the map $\mu^\vee: B(\cH)\to B(\cK)$ is completely positive.
\end{theorem}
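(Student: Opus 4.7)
The plan is to verify complete positivity of $\mu^\vee$ directly from the definition, using trace duality $\tr(\mu^\vee(a)g)=\tr(a\,\mu(g))$ to transport the required positivity of $\mu^\vee$ back to a positivity statement about $\mu$. As a preliminary, I would first check that $\mu^\vee$ preserves adjoints: from $\mu(g^*)=\mu(g)^*$, together with $\overline{\tr(X)}=\tr(X^*)$ and the cyclicity $\tr(TS)=\tr(ST)$ for $T\in B(\cK)$, $S\in T_1(\cK)$, a short chain of manipulations yields $\tr(\mu^\vee(a^*)g)=\tr(\mu^\vee(a)^*g)$ for every $g\in T_1(\cK)$, whence $\mu^\vee(a^*)=\mu^\vee(a)^*$.

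For the main step, fix $n\in\N$ and a positive matrix $A=[a_{ij}]_{i,j=1}^n\in B(\cH)^{(n)}$; the goal is to show $[\mu^\vee(a_{ij})]_{i,j=1}^n$ is positive in $B(\cK)^{(n)}$. Choose arbitrary $\x_1,\ldots,\x_n\in\cK$ and unpack
\[\sum_{i,j=1}^n\langle\mu^\vee(a_{ij})\x_j,\x_i\rangle.\]
The elementary identity $\langle T\bu,\bv\rangle=\tr(T\,\bu\bv^\vee)$ combined with trace duality rewrites each summand as $\tr\bigl(a_{ij}\,\mu(\x_j\x_i^\vee)\bigr)$.

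Now introduce the block operator $D=[\mu(\x_p\x_q^\vee)]_{p,q=1}^n$ on $\cH^n$, which is trace class because each entry is. Since the block matrix $[\x_p\x_q^\vee]_{p,q=1}^n$ is the rank one positive operator $\bu\bu^\vee$ on $\cK^n$ for $\bu=(\x_1,\ldots,\x_n)$, and $\mu$ is completely positive in $n\times n$ block form, $D\ge 0$. The block-trace formula $\tr(AD)=\sum_{p,k}\tr(a_{pk}d_{kp})$ with $d_{kp}=\mu(\x_k\x_p^\vee)$ then allows one, after a reindexing $i\leftrightarrow p$, $j\leftrightarrow k$, to recognise
\[\sum_{i,j=1}^n\tr\bigl(a_{ij}\mu(\x_j\x_i^\vee)\bigr)=\tr(AD).\]
Since $A\ge 0$ is bounded and $D\ge 0$ is trace class, $\tr(AD)=\tr(\sqrt D\,A\,\sqrt D)\ge 0$, which is the desired inequality.

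The only delicate point is the index transposition between the subscripts of $a_{ij}$ and the factors $\x_j\x_i^\vee$ when matching the double sum to the block trace; aside from that, the argument is a routine exercise in the $T_1$--$B$ duality and the $n\times n$ block formulation of complete positivity.
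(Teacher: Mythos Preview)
Your proof is correct and takes a genuinely different route from the paper's. The paper proves this theorem by invoking its main Choi-type criterion (Theorem~\ref{mainthminf2}) for $\mu^\vee:B(\cH)\to B(\cK)$: it must first verify the technical condition~(4), namely that $\mu^\vee(P_nbP_n)\to\mu^\vee(b)$ in the weak operator topology for every $b\in B(\cH)$, and this is done via the singular value decomposition of $\mu(\x\y^\vee)$ together with a dominated-convergence argument. Only after that does the paper check positivity of the Choi matrices $L_n(\mu^\vee)$, using essentially the same trace-duality computation you carry out. You bypass the Choi criterion entirely and verify $n$-positivity of $\mu^\vee$ directly from the definition: the trace duality turns $\sum_{i,j}\langle\mu^\vee(a_{ij})\x_j,\x_i\rangle$ into $\tr(AD)$ with $A\ge 0$ bounded and $D=[\mu(\x_p\x_q^\vee)]\ge 0$ trace class, and $\tr(AD)\ge 0$ is then immediate. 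Your argument is shorter and more elementary, and it makes clear that this result is really a general fact about duals of completely positive maps on $T_1$, independent of the paper's finite-dimensional-approximation machinery. The paper's approach, by contrast, serves as an application illustrating Theorem~\ref{mainthminf2}, at the cost of the extra weak-convergence verification.
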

\begin{proof}  As $\mu$ is completely positive it follows that $\mu$ maps seladjoint operators to selfadjoint.  Hence $\mu(a^*)=\mu(a)^*$.  Therefore $\mu^\vee(b^*)=(\mu^\vee(b))^*$.  We now apply Theorem \ref{mainthminf2}.  
Thus $\cH_1=\cH, \cH_2=\cK, \Sigma_1=B(\cH), \Sigma_2=B(\cK)$.   

We first observe that condition (4) holds.  Let $P_n$ be the orthogonal projection of $\cH$ on span$(\h_1,\ldots,\h_n)$.  Assume that $b\in B(\cH)$.  We need to show that  $\lim_{n\to\infty} \langle \mu^\vee(P_nbP_n)\x,\y\rangle=\langle \mu^\vee(b)\x,\y\rangle$ for each pair $\x,\y\in \cK$. Clearly
\[\langle \mu^\vee(P_nbP_n)\x,\y\rangle=\tr  \mu^\vee(P_nbP_n)\x\y^\vee=\tr P_nbP_n\mu(\x\y^\vee).\]
Write down the singular value decomposition of $\mu(\x\y^\vee)\in T_1(\cH)$:
\[ \mu(\x\y^\vee)=\sum_{i=1}^{\infty}\sigma_i \bu_i\bv_i^\vee, \quad \sum_{i=1}^\infty \sigma_i<\infty, \langle \bu_i,\bu_j\rangle=\langle \bv_i,\bv_j\rangle=\delta_{ij}, i,j\in\N.\]
Then
\[\tr P_nbP_n \bu_i\bv_i^\vee=\langle P_nbP_n \bu_i,\bv_i\rangle,\;|\tr P_nbP_n \bu_i\bv_i^\vee|\le \|b\|, \quad \lim_{n\to\infty}\tr P_nbP_n \bu_i\bv_i^\vee=\tr  b \bu_i\bv_i^\vee, \quad i\in\N.\]
Therefore 
\[\lim_{n\to\infty} \langle \mu^\vee(P_nbP_n)\x,\y\rangle=\sum_{i=1}^{\infty} \sigma_i(\tr b\bu_i\bv_i^\vee)=\tr b \mu(\x\y^\vee)=\langle \mu^\vee(b)\x,\y\rangle.\]

Let $Q_n:\cK\to\cK$ be the orthogonal projection on span$(\bk_1,\ldots,\bk_n)$.   
It is left to show that for each $n\in\N$ the matrix $L_n(\mu^\vee)=[Q_n\mu^\vee(\h_i\h_j^\vee)Q_n]_{i,j\in[n]}\in B(\cK)^{(n)}$ is positive.  That is, 
\[0\le \sum_{i,j=1}^n \langle\mu^\vee(\h_i\h_j^\vee)\x_i,\x_j\rangle=\sum_{i,j=1}^n \tr\mu^\vee(\h_i\h_j^\vee)\x_i\x_j^\vee=\sum_{i,j=1}^n \tr\h_i\h_j^\vee\mu(\x_i\x_j^\vee)=\sum_{i,j=1}^n \langle\mu(\x_i\x_j^\vee)\h_i,\h_j\rangle\]
for a given $\x_1,\ldots,\x_n\in$span$(\bk_1,\ldots,\bk_n)$.  The above inequality follows from the assumption that the map $\mu: T_1(\cK)\to T_1(\cH)$ is completely positive.

\end{proof}

Following the papers of Hellwig-Kraus \cite{HK69} and Kraus \cite{Kra71} we explain how to generate special quantum subchannel $\mu: T_1(\cK)\to T_1(\cH)$ using the ``trace out'' notion.  We consider two quantum systems which are represented as density operators $a\in T_1(\cK)$ and  $b\in T_1(\cH)$.  The combined system is described by $a\otimes b\in T_1(\cK\otimes\cH)$.  The combined system evolves to a new system $c=U(a\otimes b)U^*$, where $U$ is a unitary operator on $\cK\otimes\cH$.  
 Let $Q:\cK\to \cK$ be an orthogonal projection.  Denote $\hat{\cK}=Q\cK$. 
 Assume that  $\hat\bk_i, i\in[N]$ is an orthonormal basis of $\hat\cK$.  (Here either $N\in\N$ or $N=\infty$ and $[\infty]=\N$.)  
  Let $e=(Q\otimes Id) d(Q\otimes Id)\in T_1(\cK_1\otimes \cH)$.  Note that $e\ge 0$ and $\tr e\le 1$.  Denote by 
 \[\tr_{\hat\cK} (e)=\sum_{i=j=1}^\infty(\sum_{p\in[N]} \langle e (\hat\bk_p\otimes \h_i), \hat\bk_p\otimes \h_j\rangle)\h_i\h_j^\vee.\]
 This is the ``trace out'' operation.  It is well known that  $\tr_{\hat\cK} (e)\in T_1(\cH)$, $e\ge 0$, and $\tr (\tr_{\hat\cK} (e))=\tr e$. 
Then we obtain a new system $\mu(a)=\tr_{\hat\cK} (e)\in T_1(\cH)$.  As in \cite{HK69} it follows that
\begin{equation}\label{HKqsc}
\mu(a)=\sum_{i=1}^{\infty} A_ia A_i^\vee, \; A_i\in L(\cK,\cH),\;i\in N,\quad \sum_{i=1}^{\infty} A_i^\vee A_i\le Id.
\end{equation}
For $\hat\cK=\cK$ we get the equality $\sum_{i=1}^{\infty} A_i^\vee A_i=Id$.  

 Note that the convergence of the infinite sum in \eqref{HKqsc} is in the weak operator topology.  Furthermore, it is straightforward to show that if $\mu$ is a quantum subchannel given by \eqref{HKqsc} then the dual $\mu^\vee: B(\cK)\to B(\cH)$ is given by $\mu^\vee(c)=\sum_{i=1}^\infty A_i^\vee c A_i$.
 
 It is shown in \cite{Hol11} that every trace preserving $\mu: T_1(\cK)\to T_1(\cH)$, where $\cK$ and $\cH$ are separable, has the Krauss representation \eqref{HKqsc} with $\sum_{i=1}^\infty A_i^\vee A_i=Id$.  We now use the arguments in \cite{Kra71,Hol11}
 to show that any quantum subchannel $\mu:\cK\to\cH$ has the form \eqref{HKqsc}. 
 For $\cK=\cH$ this result is \cite[Theorem 3.3]{Kra71} under the additional assumption that $\mu$ is continuous in the weak operator topology.   
 \begin{theorem}\label{subchthm}  Let $\cK$ and $\cH$ be two separable Hilbert spaces.  Then $\mu:\cK\to\cH$ is a quantum subchannel if and only if there exist bounded linear operators $A_i\in L(B(\cK),B(\cH))$ for $i\in\N$ such that \eqref{HKqsc} holds,
 \end{theorem}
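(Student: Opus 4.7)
The easy direction is to verify that if $\mu(a) = \sum_i A_i a A_i^\vee$ with $\sum_i A_i^\vee A_i \le Id_\cK$ (convergence in the weak operator topology), then $\mu$ is a quantum subchannel. Complete positivity is clear because each individual term $a \mapsto A_i a A_i^\vee$ is completely positive — on a positive block matrix $[g_{pq}]$ it acts by $[A_i g_{pq} A_i^\vee]$, which is positive — and positivity is preserved under sums and weak operator limits. The subchannel condition follows from $\tr(A_i a A_i^\vee) = \tr(A_i^\vee A_i a)$ for $a \in T_1(\cK)$ positive, summing to $\tr\mu(a) = \tr\bigl((\sum_i A_i^\vee A_i) a\bigr) \le \tr a$.

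For the only-if direction, the plan is to transport the problem to the dual side. Suppose $\mu$ is a quantum subchannel. By Theorem~\ref{composmus} the dual map $\mu^\vee: B(\cH) \to B(\cK)$ is completely positive, and by the trace/operator duality the condition $\tr\mu(a) \le \tr a$ for positive $a \in T_1(\cK)$ is equivalent to $\mu^\vee(Id_\cH) \le Id_\cK$. Moreover, being the Banach-space adjoint of the bounded operator $\mu: T_1(\cK) \to T_1(\cH)$, the map $\mu^\vee$ is continuous with respect to the ultraweak (weak-$*$) topologies on $B(\cH)$ and $B(\cK)$; equivalently, $\mu^\vee$ is normal.

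Next, apply Stinespring's dilation theorem (Theorem~\ref{Stinespring}) to the completely positive map $\mu^\vee: B(\cH) \to B(\cK)$ on the unital $C^*$-algebra $B(\cH)$. This yields a Hilbert space $\cE$, a unital $*$-representation $\rho: B(\cH) \to B(\cE)$, and a bounded operator $V: \cK \to \cE$ with $\mu^\vee(b) = V^*\rho(b)V$ and $V^*V = \mu^\vee(Id_\cH) \le Id_\cK$. Passing to the minimal Stinespring dilation (the closed linear span of $\rho(B(\cH))V\cK$), the normality of $\mu^\vee$ forces $\rho$ to be normal as well. The classical structure theorem for normal $*$-representations of $B(\cH)$ then supplies a separable Hilbert space $\cN$ and a unitary identification $\cE \cong \cH \otimes \cN$ under which $\rho(b) = b \otimes Id_\cN$.

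Finally, fix an orthonormal basis $\{\bn_i\}_{i\in\N}$ of $\cN$, and decompose $V\x = \sum_i (A_i \x) \otimes \bn_i$, where the bounded operators $A_i: \cK \to \cH$ are defined by $\langle A_i \x, \y \rangle = \langle V\x, \y \otimes \bn_i \rangle$. A direct computation using $V^*(\y \otimes \bn_j) = A_j^\vee \y$ then gives $\mu^\vee(b) = V^*(b \otimes Id_\cN)V = \sum_i A_i^\vee b A_i$ in the weak operator topology, and in particular $\sum_i A_i^\vee A_i = V^*V \le Id_\cK$. Taking the predual yields $\mu(a) = \sum_i A_i a A_i^\vee$ for every $a \in T_1(\cK)$, which is the desired Hellwig-Kraus representation \eqref{HKqsc}. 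The main obstacle is the normal Stinespring step — producing a dilation in which $\rho$ is $w^*$-continuous and then invoking the classification of normal representations of $B(\cH)$ as amplifications $b \mapsto b \otimes Id_\cN$; once that structural fact is in hand, the extraction of the $A_i$ and the verification of the bound are routine.
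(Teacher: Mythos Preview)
Your argument is correct, but it follows a genuinely different route from the paper's.

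The paper works directly on the predual side: it equips the algebraic tensor product $\cK\circ\cH^\vee$ with the positive semidefinite sesquilinear form
\[
\Omega_\mu\Bigl(\sum_i \x_i\otimes\bu_i^\vee,\sum_j \y_j\otimes\bv_j^\vee\Bigr)=\sum_{i,j}\langle\mu(\x_i\y_j^\vee)\bv_j,\bu_i\rangle,
\]
completes and quotients to obtain a separable Hilbert space $\cN$, picks an orthonormal basis $(\f_l)$, and defines each $A_l$ by $\langle A_l\x,\bu\rangle=\langle \x\otimes\bu^\vee,\f_l\rangle_\mu$. The subchannel inequality $\tr\mu(\x\x^\vee)\le\|\x\|^2$ then gives $\sum_l A_l^\vee A_l\le Id$ by a direct Parseval computation, and the representation $\mu(\x\y^\vee)=\sum_l (A_l\x)(A_l\y)^\vee$ drops out of the same formulas. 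This is essentially the Stinespring/GNS construction carried out by hand for the specific map $\mu$, with no appeal to duality or to the structure theory of normal representations.

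Your approach instead passes to the dual $\mu^\vee:B(\cH)\to B(\cK)$, observes that it is automatically normal (being a Banach-space adjoint), invokes Stinespring as a black box, and then uses the classification of normal $*$-representations of $B(\cH)$ as amplifications $b\mapsto b\otimes Id_\cN$ to extract the $A_i$ from the isometry $V$.

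What each approach buys: the paper's argument is fully self-contained and elementary---it needs nothing beyond the definition of complete positivity and basic Hilbert-space facts---which is appropriate given that one of the paper's stated aims is to drop the ultraweak-continuity hypothesis of Kraus without importing heavier machinery. Your argument is shorter and more conceptual, but it relies on two nontrivial external facts (that the minimal Stinespring dilation of a normal map has a normal $\rho$, and the amplification theorem for normal representations of $B(\cH)$); you also implicitly use separability of the minimal dilation space to get a countable basis of $\cN$, which is true but deserves a sentence. Both proofs ultimately build the same Hilbert space---your $\cH\otimes\cN$ is exactly the completion the paper constructs---they just reach it from opposite ends of the duality.
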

 \begin{proof}  Assume that $\mu: \cK\to \cH$ is a quantum subchannel.
 Let $\cK\circ \cH^\vee$ be the algebraic tensor product of $\cK$ and $\cH^\vee$, (the dual space of $\cH$).  That is, the elements of $\cK\circ \cH^\vee$  are finite sums of the form $\sum_{i=1}^N \x_i\otimes \bu_i^\vee$ for $\x_i\in\cK,\bu_i\in \cH,N\in\N$.  We define the following sesquilinear form on $\cK\circ \cH$:
 \[\Omega_{\mu}(\sum_{i=p=1}^N \x_i\otimes\bu_p^\vee,\sum_{j=q=1}^N\y_j\otimes\bv_q^\vee)=\sum_{i=j=p=q=1}^N \langle\mu(\x_i\y_j^\vee)\bv_q,\bu_p\rangle.\]
 The assumption that $\mu$ is completely positive means that $\Omega_{\mu}$ is positive semidefinite.  Then $\Omega_{\mu}$ defines a semiinner product and a seminorm 
 \[\langle \z,\bw\rangle_{\mu}=\Omega_\mu(\z,\bw),\quad \| \z\|_{\mu}=\sqrt{\Omega_\mu(\z,\z)}, \quad \z,\bw\in \cK\circ\cH^\vee.\]
 Clearly, we have the Cauchy-Schwarz inequality $|\langle \z,\bw\rangle_{\mu}|\le \| \z\|_{\mu}\| \bw\|_{\mu}$.  Let $\cL$ be the closure of $\cK\circ\cH^\vee$ with respect to the semimetric $\|\cdot\|_\mu$.  Then we can extend the semiinner product $\langle \cdot,\cdot\rangle_\mu$ and the seminorm $\|\cdot\|_\mu$ to $\cL$.
 Denote by $\cM$ the linear subspace of all $\z\in\cL$ such that $\|\z\|_\mu=0$.  Then $\cN=\cL/\cM$ is a separable Hilbert space.  Assume for simplicity of the exposition that $\cN$ is infinite dimensional. Let $\f_l,l\in\N$ be an orthonormal basis of $\cN$.  For $l\in\N$ denote by $\phi_l:\cN\to \C$ the linear functional $\phi_l(\z)=\langle \z,\f_l\rangle_\mu$.  Then $\phi_l$ extends to a linear functional $\hat\phi_l:\cM\to\C$: $\hat\phi_l(\bw)=\langle \bw,\f_l\rangle_\mu$ and $|\langle \bw,\f_l\rangle_\mu|\le \|\bw\|_\mu$.  Moreover
 \[\bw=\sum_{l=1}^\infty \langle \bw,\f_l\rangle_\mu \f_l, \quad \|\bw\|_\mu^2=\sum_{l=1}^\infty |\langle \bw,\f_l\rangle_\mu|^2,\;\bw\in\cM.\]
 In particular
 \[\|\x\otimes \bu^\vee\|^2_\mu=\langle \mu(\x\x^\vee)\bu,\bu\rangle=\sum_{l=1}^\infty |\langle \x\otimes\bu^\vee,\f_l\rangle_\mu|^2.\]
 Assume that $\bu\ne 0$ and let $\bu_i,i\in\N$ be an orthonormal basis, where $\bu_1=\frac{1}{\|\bu\|}\bu$.  Then $\tr \mu(\x\x^\vee)=\sum_{i=1}^\infty \langle \mu(\x\x^\vee)\bu_i,\bu_i\rangle$.  As $\mu$ is subchannel we obtain
 \begin{equation}\label{subchanin}
 \sum_{i=1}^\infty \langle \mu(\x\x^\vee)\bu_i,\bu_i\rangle \le \tr \x\x^\vee=\|\x\|^2.
 \end{equation}
 In particular $\|\x\otimes\bu^\vee\|_\mu\le \|\x\|\|\bu\|$.
 Define $A_l:\cK\to \cH$ by equality $\langle A_l\x,\bu\rangle=\langle \x\otimes \bu^\vee,\f_l\rangle_\mu$.  So $\|A_l\|\le 1$ and $A_l\x=\sum_{i=1}^\infty \langle \x\otimes \bu_i^\vee,\f_l\rangle_\mu \bu_i$. Thus
 \begin{eqnarray*}
 \langle(\sum_{l=1}^N A_l^\vee A_l)\x,\x\rangle=\sum_{l=1}^N \langle A_l^\vee A_l\x,\x\rangle=\sum_{l=1}^N \langle  A_l\x,A_l\x\rangle=\sum_{l=1}^N \langle\sum_{i=1}^\infty  \langle \x\otimes \bu_i^\vee,\f_l\rangle_\mu \bu_i,\sum_{j=1}^\infty  \langle \x\otimes \bu_j^\vee\rangle_\mu\bu_j\rangle=\\
 \sum_{l=1}^N \sum_{i=1}^\infty  |\langle \x\otimes \bu_i^\vee,\f_l\rangle_\mu|^2=
 \sum_{i=1}^\infty \sum_{l=1}^N   |\langle \x\otimes \bu_i^\vee,\f_l\rangle_\mu|^2\le 
  \sum_{i=1}^\infty \langle \mu(\x\x^\vee)\bu_i,\bu_i\rangle \le \tr \x\x^\vee=\|\x\|^2.
 \end{eqnarray*}
 Thus $\sum_{l=1}^N A_l^\vee A_l\le Id$ for each $N\in\N$.  Hence $\sum_{l=1}^\infty A_l^\vee A_l\le I_d$.  
 
 It is left to show that $\mu$ is given by \eqref{HKqsc}.  
 Let $\x,\y\in\cK$.  Then $\x\y^\vee\in T_1(\cK)$.  We need to show that $\mu(\x\y^\vee)=\sum_{l=1}^\infty (A_l\x) (A_l\y)^\vee$.  Thus, it is enough to show that 
 \[\langle \mu(\x\y^\vee)\bu,\bv\rangle=\sum_{l=1}^\infty \langle \bu, A_l\y\rangle \langle A_l\x,\bv\rangle \quad \textrm{for all } \x,\y\in\cK, \bu,\bv\in\cH.\]
 Recall that $\langle A_l\x,\bv\rangle=\langle \x\otimes\bv^\vee, \f_l\rangle_\mu$ and $\langle \bu, A_l\y\rangle=\overline{\langle A_l\y, \bu\rangle}=\overline{\langle \y\otimes\bu^\vee, \f_l\rangle_\mu}$.  Hence
 \[\sum_{l=1}^\infty \langle \bu, A_l\y\rangle \langle A_l\x,\bv\rangle=\sum_{l=1}^\infty 
 \langle \x\otimes\bv^\vee, \f_l\rangle_\mu\overline{\langle \y\otimes\bu^\vee, \f_l\rangle_\mu}=\langle  \x\otimes\bv^\vee,\y\otimes\bu^\vee\rangle_\mu=\langle \mu(\x\y^\vee)\bu,\bv\rangle.\]
 
  Assume that \eqref{HKqsc} holds.  For $N\in\N$ define $\mu_N:B(\cK)\to B(\cH)$ by the equality $\mu_N(a)=\sum_{l=1}^N A_l a A_l^\vee$. Clearly, $\mu_N$ is completely positive.  Also $\mu_N$ is a quantum subchannel as $\mu_N: T_1(\cK)\to T_1(\cH)$.
 Therefore $\mu$ given by \eqref{HKqsc} is a subchannel.
 \end{proof}
 
 \noindent
 \emph{Acknowledgement}.  Part of this work was carried out when the author visited 
Institute of Systems Science, 
Academy of Mathematics and System Sciences,
Academia Sinica, Beijing, China, in May 2018.  I thank my host Professor Lihong Zhi for raising the problem of generalizing Choi's theorem to the infinite dimensional case.

 \end{document}